\numberwithin{equation}{section}
\numberwithin{figure}{section}
\numberwithin{table}{section}
\theoremstyle{plain}
\newtheorem{thm}{\protect\theoremname}[section]
  \theoremstyle{definition}
  \newtheorem{defn}[thm]{\protect\definitionname}
  \theoremstyle{plain}
  \newtheorem{prop}[thm]{\protect\propositionname}
  \theoremstyle{plain}
  \newtheorem{cor}[thm]{\protect\corollaryname}
  \theoremstyle{remark}
  \newtheorem{rem}[thm]{\protect\remarkname}
  \providecommand{\corollaryname}{Corollary}
  \providecommand{\definitionname}{Definition}
  \providecommand{\propositionname}{Proposition}
  \providecommand{\remarkname}{Remark}
\providecommand{\theoremname}{Theorem}
\begin{document}
\selectlanguage{english}%
\global\long\def\C{\mathbb{C}}

\global\long\def\G{\mathbb{G}}

\global\long\def\N{\mathbb{N}}

\global\long\def\P{\mathbb{P}}

\global\long\def\Q{\mathbb{Q}}

\global\long\def\oQ{\overline{\mathbb{Q}}}

\global\long\def\Z{\mathbb{Z}}

\global\long\def\Cc{\mathcal{C}}

\global\long\def\Lc{\mathcal{L}}

\global\long\def\dom{\operatorname{dom}}

\global\long\def\Gal{\operatorname{Gal}}

\global\long\def\img{\operatorname{img}}

\global\long\def\ld{\operatorname{lin.d.}}

\global\long\def\td{\operatorname{tr.deg.}}
\selectlanguage{british}%

\title{A pseudoexponential-like structure on the algebraic numbers}

\author{Vincenzo Mantova}

\date{7th February 2014}
\begin{abstract}
Pseudoexponential fields are exponential fields similar to complex
exponentiation satisfying the Schanuel Property, which is the abstract
statement of Schanuel's Conjecture, and an adapted form of existential
closure.

Here we show that if we remove the Schanuel Property and just care
about existential closure, it is possible to create several existentially
closed exponential functions on the algebraic numbers that still have
similarities with complex exponentiation. The main difficulties are
related to the arithmetic of algebraic numbers, and they can be overcome
with known results about specialisations of multiplicatively independent
functions on algebraic varieties.
\end{abstract}
\maketitle

\section{Introduction}

Pseudoexponentiation is a structure introduced by Zilber in \cite{Zilber2005}
in order to find out how $\C_{\exp}$ should look like if it were
well-behaved, at least for the criteria of a model theorist. The unavoidable
problem of $\C_{\exp}$ is that it defines the ring of integers, hence
Peano's arithmetic, defying the model-theoretic tools widely used
in the last decades.

However, Zilber proved that if $\C_{\exp}$ satisfies certain algebraic
conjectures, Peano's arithmetic is in some sense the only true problem.
He showed that there is a sentence $\Psi$, in the infinitary language
$\Lc_{\omega_{1},\omega}(Q)$, which is \emph{uncountably categorical},
and that describes an exponential field which is reasonably similar
to $\C_{\exp}$. Its models have been called pseudoexponential fields,
perfect exponential fields, or Zilber fields. The two main statements
contained in $\Psi$, which are currently only conjectures for $\C_{\exp}$,
are the Schanuel Property and the Strong Exponential-algebraic Closure.

The Schanuel Property is just a rephrasing of Schanuel's Conjecture
for an abstract exponential function $E$, and it asserts that for
any $z_{1},\dots,z_{n}$ in the base field we have
\[
\td(z_{1},\dots,z_{n},E(z_{1}),\dots,E(z_{n}))\geq\ld_{\Q}(z_{1},\dots,z_{n}).
\]

It is well known that the Schanuel Property is not enough to characterise
well an exponential function, as formally shown by Hyttinen in \cite{Hyttinen2005}:
there are $2^{2^{\aleph_{0}}}$ pairwise non-isomorphic surjective
exponential functions on $\C$ satisfying the Schanuel Property and
whose kernel is a cyclic group.

Here we show a related result, in a quite different vein, about the
Exponential-algebraic Closure. We show that if we drop all the assumptions
about transcendence in Zilber's axiom $\Psi$, then we can construct
several model where the Schanuel Property is falsified in the most
drastic way: everything is algebraic!
\begin{thm}
\label{thm:main}There is a function $E:\oQ\to\oQ^{*}$ such that
\begin{enumerate}
\item $E(x+y)=E(x)\cdot E(y)$ for all $x,y\in\oQ$;
\item $\ker(E)=\omega\Z$ for some $\omega\in\oQ^{*}$;
\item $E$ is surjective;
\item for any absolutely free variety $V$ over $\oQ$ there is an $\overline{z}\subset\overline{\Q}$
such that $(\overline{z},E(\overline{z}))\in V$, and the points of
this form are Zariski-dense in $V$.
\end{enumerate}
\end{thm}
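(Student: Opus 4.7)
The plan is to construct $E$ by a countable back-and-forth enumeration producing an increasing chain of partial group homomorphisms $E_\alpha \colon A_\alpha \to \oQ^*$ where each $A_\alpha$ is a finitely generated $\Q$-vector subspace of $\oQ$, with $E := \bigcup_\alpha E_\alpha$ as the final output. Property (2) is enforced at stage $0$ by fixing $\omega \in \oQ^*$, setting $E(\omega) = 1$, and extending consistently to $\Q\omega$ by a coherent injection of $\Q/\Z$ into the roots of unity of $\oQ$; later stages must then never introduce a new kernel element, which amounts to maintaining $\ker E_\alpha = \omega\Z \cap A_\alpha$ throughout.

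The enumeration interleaves three kinds of extension tasks: (i) add a prescribed $a \in \oQ$ to $A_\alpha$ (for totality), (ii) realise a prescribed $b \in \oQ^*$ as a value of $E$ (for surjectivity), and (iii) for a prescribed absolutely free variety $V \subset \A^n \times \G_m^n$ and proper subvariety $V' \subsetneq V$, produce $\overline{x} \in \oQ^n$ with $(\overline{x}, E(\overline{x})) \in V \setminus V'$. Tasks (i) and (ii) are routine: because $\oQ^*$ modulo torsion is divisible and torsion-free, fresh preimages or fresh images can always be chosen so that no new multiplicative relation is forced on the current image and no new kernel element is created.

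Task (iii) is where the real work happens, and it is where the results advertised in the abstract come into play. Over $\C$ one could simply pick a transcendental generic point of $V \setminus V'$ and absolute freeness of $V$ would automatically guarantee that its $y$-coordinates are multiplicatively independent modulo the subgroup $\Gamma_\alpha \subset \oQ^*$ generated by the current image of $E_\alpha$. Since we must work in $\oQ$, the generic point has to be specialised to an \emph{algebraic} one while preserving this independence. This is exactly what Bombieri--Masser--Zannier type specialisation theorems provide: the coordinate functions $y_1, \ldots, y_n$, being multiplicatively independent modulo $\Gamma_\alpha$ on $V$ by the absolute freeness assumption, remain so at values on all but a thin subset of the $\oQ$-points of $V$; one then picks any point of $V \setminus V'$ outside that thin set.

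Given such a point $(\overline{x}, \overline{y})$, extending $E_\alpha$ by sending a $\Q$-basis of $(A_\alpha + \Q\overline{x})/A_\alpha$ to the matching entries of $\overline{y}$, with a suitable torsion adjustment when $\overline{x}$ is only partially $\Q$-linearly dependent on $A_\alpha$, yields a well-defined homomorphism, and the preserved multiplicative independence guarantees that no new kernel element appears. After countably many stages the union $E$ is total, surjective, has kernel $\omega\Z$, and meets every requested $V \setminus V'$, verifying all four conclusions. The entire difficulty of the theorem is therefore concentrated in the arithmetic specialisation step of task (iii); everything else is standard back-and-forth bookkeeping.
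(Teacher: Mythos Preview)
Your overall architecture matches the paper's: start on $\Q\omega$ with values in roots of unity, interleave domain-extension, image-extension, and variety-hitting tasks, and take the union. The gap is in task (iii).

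You only arrange for the multiplicative coordinates $\overline{y}$ of the chosen algebraic point to be multiplicatively independent from the current image $\Gamma_\alpha$. You then propose to extend $E_\alpha$ by ``sending a $\Q$-basis of $(A_\alpha + \Q\overline{x})/A_\alpha$ to the matching entries of $\overline{y}$, with a suitable torsion adjustment''. This does not work. If some nontrivial relation $\sum m_i x_i = a \in A_\alpha$ holds at the chosen point, then consistency of the extension forces $\prod y_i^{m_i} = E_\alpha(a) \in \Gamma_\alpha$; but your multiplicative independence of the $y_i$ from $\Gamma_\alpha$ says precisely that this cannot happen for nonzero $(m_i)$. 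So whenever $\overline{x}$ has any linear dependence on $A_\alpha$, no extension of $E_\alpha$ can satisfy $E(x_i)=y_i$ for all $i$, and multiplying some $y_i$ by roots of unity does not help: the modified tuple is no longer a point of $V$. In short, the ``torsion adjustment'' clause hides an impossible step rather than a routine one.

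What is actually needed is to choose the algebraic point so that \emph{both} conditions hold simultaneously: the $x_i$ are $\Q$-linearly independent from the current domain $A_\alpha$, and the $y_i$ are multiplicatively independent from the current image $\Gamma_\alpha$. The paper isolates this as its key Proposition~\ref{prop:good-points}. The multiplicative half is indeed handled by Masser's specialisation results, as you say; but the additive half requires a separate argument (a Galois-conjugate count showing that for points of sufficiently large degree over a field of definition no linear relation with $A_\alpha$ can occur), and the two halves then have to be combined on a curve obtained by repeated Bertini-type slicing. Your sketch omits the additive independence entirely, and that is the missing idea.
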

If we consider the class of structures $K_{E}$, where $K$ is a field
and $E$ is an exponential function with cyclic kernel, then $\oQ_{E}$
is existentially closed: whenever $\oQ_{E}\subset K_{E'}$, and some
finite system of polynomial exponential equations and inequations
with parameters in $\oQ$ has a solution in $K_{E'}$, then it already
has a solution in $\oQ_{E}$.

The proof of \prettyref{thm:main} is given using an explicit inductive
construction very similar to the one of \cite{Mantova2011a} and it
is described, along with the list of Zilber's axioms, in \prettyref{sec:cons}.
The fact that the construction itself is well-defined, and it works
as desired, is proved in \prettyref{sec:goodpoints}, thanks to some
arithmetic properties of number fields about specialisations that
were analysed in \cite{Masser1989}.

The author would like to thank his supervisor Prof.\ Alessandro Berarducci,
who proposed to study pseudoexponential fields, Jonathan Kirby for
having proposed the problem solved in this paper, and Profs.\ David
Masser and Umberto Zannier for the suggestions about the number-theoretic
part of this paper that greatly simplified the discussion. This work
was part of the author's PhD work at the \foreignlanguage{italian}{Scuola
Normale Superiore} of Pisa, and it has been partially supported by
the PRIN-MIUR 2009 ``\foreignlanguage{italian}{O-mi\-ni\-ma\-li\-tà,
teo\-ria de\-gli in\-sie\-mi, me\-to\-di e mo\-del\-li non\-stan\-dard
e ap\-pli\-ca\-zio\-ni}'', the EC's Seventh Framework Programme
{[}FP7/2007-2013{]} under grant agreement no.~238381, and the FIRB
2010 ``\foreignlanguage{italian}{Nuo\-vi svi\-lup\-pi nel\-la
Teo\-ria dei Mo\-del\-li del\-l'es\-po\-nen\-zia\-zio\-ne}''.

\section{The construction}

\label{sec:cons}

\subsection{Zilber's original axiomatisation}

For the sake of clarity, we briefly recall the axiomatisation of actual
pseudoexponential fields. A field $K_{E}$ is a pseudoexponential
field if it satisfies the following list of axioms. The terms in quotation
marks are not defined here; we shall only explain the meaning of the
properties that actually matter for our purposes. We refer the reader
to \cite{Zilber2005,Marker2006} for a more complete treatment of
the subject.

\subsubsection{Trivial properties of $\C_{\exp}$}
\begin{enumerate}
\item [(ACF\textsubscript{0})] $K$ is an algebraic closed field of characteristic
$0$.
\item [(E)] $E$ is a homomorphism $E:(K,+)\to(K^{\times},\cdot)$.
\item [(LOG)] $E$ is surjective (every element has a logarithm).
\item [(STD)] the kernel is a cyclic group, i.e., $\ker E=\omega\Z$ for
some $\omega\in K^{\times}$.
\end{enumerate}

\subsubsection{Axioms conjecturally true on $\C_{\exp}$}
\begin{enumerate}
\item [(SP)] \emph{Schanuel Property}: for every $z_{1},\dots,z_{n}\in K$
\[
\td(z_{1},\dots,z_{n},E(z_{1}),\dots,E(z_{n}))\geq\ld_{\Q}(z_{1},\dots,z_{n}).
\]

\item [(SEC)] \emph{Strong Exponential-algebraic Closure}: for every irreducible
``absolutely free rotund'' algebraic variety $V\subset K^{n}\times(K^{*})^{n}$,
and every finite subset $\overline{c}\subset K$ such that $V$ is
defined over $\overline{c}$, there is a $\overline{z}\in K^{n}$
such that $(\overline{z},E(\overline{z}))$ is a generic point of
$V$ over $\overline{c}$.
\end{enumerate}

\subsubsection{A non-trivial property of $\C_{\exp}$ \cite[Lemma 5.12]{Zilber2005}}
\begin{enumerate}
\item [(CCP)] \emph{Countable Closure Property}: for every irreducible
``absolutely free rotund'' algebraic variety $V\subset\G^{n}$ over
$K$ of ``depth $0$'', and every finite subset $\overline{c}\subset K$
such that $V$ is defined over $\overline{c}$, the set of the points
of $V$ of the form $(\overline{z},E(\overline{z}))$ that are generic
over $\overline{c}$ is at most countable.
\end{enumerate}
For the purposes of this paper, we actually only care about the meaning
of ``absolutely free''. The additional properties ``rotund'' and
``depth $0$'' are deeply related to the presence of the Schanuel
Property, and moreover they have rather complicated definitions, so
we will omit them here.
\begin{defn}
An irreducible algebraic variety $V\subset K^{n}\times(K^{*})^{n}$
is \emph{additively free} over $L\subset K$ if its projection onto
$K^{n}$ is not contained in a proper $\Q$-linear subspace defined
over $L$. In other words, the coordinate functions of the factor
$K^{n}$ restricted to $V$ are $\Q$-linearly independent from $L$.
\end{defn}
We can state a similar property for the multiplicative side $(K^{*})^{n}$.
\begin{defn}
An irreducible algebraic variety $V\subset K^{n}\times(K^{*})^{n}$
is \emph{multiplicatively free} over $M\subset K^{*}$ if its projection
onto $(K^{*})^{n}$ is not contained in a proper algebraic subgroup
of $(K^{*})^{n}$ defined over $M$. In other words, the coordinate
functions of the factor $(K^{*})^{n}$ restricted to $V$ are multiplicatively
independent from $M$.
\end{defn}
Absolute freeness is when we have both properties with $L=K$ and
$M=K^{*}$.
\begin{defn}
A variety $V\subset K^{n}\times(K^{*})^{n}$ is \emph{absolutely additively
free} if it is additively free over $K$.

$V$ is \emph{absolutely multiplicatively free} if it is multiplicatively
free over $K^{*}$.

$V$ is \emph{absolutely free} if it is both absolutely additively
free and absolutely multiplicatively free.
\end{defn}

\subsection{Axioms for $\oQ_{E}$}

Our goal is to build an exponential field $\oQ_{E}$ as similar as
possible to pseudoexponentiation, but clearly without the axiom (SP).
We definitely want, and actually we can, keep the trivial properties
of $\C_{\exp}$ as they are. Moreover, the axiom (CCP) doesn't even
need to be mentioned, as $\oQ$ itself is countable. The only axiom
that requires some changes is (SEC), as it requires the points $(\overline{z},E(\overline{z}))$
to be ``generic'', and in particular of transcendence degree more
than zero, which is not possible in $\oQ$.

The axiom (SEC) is a special form of existential closure adapted to
the presence of (SP) and to Hrushovski's amalgamation: if a system
of equations and inequations in $K_{E}$ has a solution in some ``strong
kernel preserving extension'', than it has already a solution in
$K_{E}$, plus a genericity assumption. For our purpose, we shall
require that if a system of equations has a solution in some kernel
preserving extension of $\oQ_{E}$, then it has a solution in $\oQ_{E}$.
We drop genericity, and we simplify the discussion by also dropping
the word ``strong'', which is due to the presence of (SP) and it
is therefore irrelevant four our purposes.

It can be easily verified that this condition is equivalent to the
following:
\begin{enumerate}
\item [(EC)]For any absolutely free variety $V\subset\oQ^{n}\times(\oQ^{*})^{n}$
there is a $\overline{z}\in\overline{\Q}^{n}$ such that $(\overline{z},E(\overline{z}))\in V$,
and the points of this form are Zariski-dense in $V$.
\end{enumerate}
The two differences with (SEC) are that we do not require $V$ to
be rotund, which is essentially linked to the use of strong extensions
and the presence of (SP), and that we explicitly force the points
$(\overline{z},E(\overline{z}))$ to be Zariski-dense, while in (SEC)
this is automatic by genericity. (As we will note later, a quite standard
argument can be used to show that the density condition is actually
redundant.)

\subsection{The construction}

The construction is quite similar to other construction techniques
\cite{Kirby2009a,Mantova2011a}. We define the function $E$ by induction
using a back-and-forth procedure.

Let us fix $\omega\in\oQ^{*}$ and let us define our base function
as $E_{-1}(\frac{p}{q}\omega)=\zeta_{q}^{p}$, for $p,q\in\Z$, where
$\{\zeta_{q}\}_{q\in\Z}$ is a ``coherent'' system of roots of unity,
where by coherent we mean that $\zeta_{pq}^{p}=\zeta_{q}$ for all
$p,q\in\N$. This yields $\ker(E_{-1})=\omega\Z$.

Now let $\{\alpha_{n}\}$ be an enumeration of $\oQ^{*}$ and $V_{n}$
an enumeration of all the irreducible absolutely free algebraic varieties
$V_{n}$. At each step $n<\omega$ we proceed as follows:
\begin{enumerate}
\item If $\alpha_{n}$ is not in the domain of $E_{n-1}$, we choose some
$\beta\in\oQ^{*}\setminus\img(E_{n-1})$ and we define
\[
E_{n-1}^{1}(\alpha+\frac{p}{q}\alpha_{n}):=E_{n-1}(\alpha)\cdot\beta^{p/q},
\]
for all $\alpha\in\dom(E_{n-1})$ and $p,q\in\Z$, where $\beta^{1/q}$
is a coherent system of roots of $\beta$. If $\alpha_{n}$ is in
the domain, we just define $E_{n-1}^{1}:=E_{n-1}$.
\item If $\alpha_{n}$ is not in the image, we choose some $\beta\in\oQ\setminus\dom(E_{n-1}^{1})$
and we define
\[
E_{n-1}^{2}(\alpha+\frac{p}{q}\beta):=E_{n-1}^{2}(\alpha)\cdot\alpha_{n}^{p/q},
\]
for all $\alpha\in\dom(E_{n-1}^{1})$ and $p,q\in\Z$. If $\alpha_{n}$
is already in the image, we just define $E_{n-1}^{2}:=E_{n-1}^{1}$.
\item If $V_{n}\subset\oQ^{k}\times(\oQ^{*})^{k}$, we take a point $(\alpha_{1},\dots,\alpha_{k},\beta_{1},\dots,\beta_{k})\in V_{n}$
such that

\begin{enumerate}
\item $\alpha_{1},\dots,\alpha_{k}$ is $\Q$-linearly independent from
$\dom(E_{n-1}^{2})$;
\item $\beta_{1},\dots,\beta_{k}$ is multiplicatively independent from
$\img(E_{n-1}^{2})$;
\end{enumerate}

\noindent and we define $E_{n}(\alpha+\frac{p_{1}}{q_{1}}\alpha_{1}+\dots+\frac{p_{k}}{q_{k}}\alpha_{k}):=E_{n-1}^{2}(\alpha)\cdot\beta_{1}^{p_{1}/q_{1}}\cdot\dots\cdot\beta_{k}^{p_{k}/q_{k}}$
for all $\alpha\in\dom(E_{n-1}^{2})$ and $p_{i},q_{i}\in\Z$. 

\end{enumerate}
The limit function $E:=\bigcup_{n<\omega}E_{n}$ is the function we
sought in\ \prettyref{thm:main}. We can verify that the above construction
is sound; the only critical step is (3), since it is not completely
trivial that such a choice of $\alpha_{i}$ and $\beta_{i}$ is possible.
However, their existence can be deduced from \prettyref{prop:good-points},
which is described in the next section. 
\begin{proof}[Proof of \prettyref{thm:main}]
 It is immediate to see that the steps (1) and (2) can always be
performed, as $\dom(E_{n})$, $\img(E_{n})$, $\dom(E_{n}^{1})$ and
$\img(E_{n}^{1})$ are always finite-rank groups, and therefore we
can always find a suitable $\beta$.

Since $\dom(E_{n}^{2})$ and $\img(E_{n}^{2})$ are finite-rank subgroups
as well, \prettyref{prop:good-points} implies that $V_{n}$ contains
a point with the required properties.

It is again immediate to see that $E_{n}$ is a well defined function,
and in particular $E$ is well defined too, since $\dom(E_{n})$ is
always a $\Q$-vector space, and the new elements on which we define
the function are always $\Q$-linearly independent from the previous
domain. Moreover, $E$ is defined everywhere.

Similarly, $\ker(E)=\ker(E_{n})=\ker(E_{-1})=\omega\Z$, since every
time we define the new function, the new elements in the image are
multiplicatively independent from the previous image. Moreover, $E$
is surjective.

Finally, is is a standard argument to show that if every algebraic
variety $V$ contains a point of the form $(\overline{z},E(\overline{z}))$,
as it is the case for the function $E$ we constructed, then such
points are Zariski-dense.

Indeed, let $V$ be a given irreducible absolutely free algebraic
variety in $\oQ^{k}\times(\oQ^{*})^{k}$ and let $W\subset V$ be
a Zariski-closed proper subset of $V$. Without loss of generality,
we may assume that there is a polynomial $p\in K[x_{1},\dots,x_{2k}]$
such that $W=V\cap\{p=0\}$. It is now sufficient to consider the
variety $H\subset\oQ^{k+1}\times(\oQ^{*})^{k+1}$ defined by the same
equations defining $V$ on the first $k$ coordinates of $\oQ^{k+1}$
and of $(\oQ^{*})^{k}$, and by the equation $px_{2k+1}=1$, where
$x_{2k+1}$ is the last coordinate of $\oQ^{k+1}$. This variety must
contain a point of the form $(\overline{z}',E(\overline{z}'))$; its
projection on $\oQ^{k}\times(\oQ^{*})^{k}$ is a point of $V$ outside
of $W$. On varying $W$, this shows that such points are Zariski-dense
in $V$.
\end{proof}

\subsubsection*{Free exponential closure}

We want to remark the fact that our construction actually satisfies
the following ``free'' version of Exponential-algebraic closure:
\begin{enumerate}
\item [(FEC)] \emph{Free Exponential-algebraic Closure}: for every irreducible
absolutely free algebraic variety $V\subset\oQ^{n}\times(\oQ^{*})^{n}$,
and every finite subset $\overline{c}\subset\oQ$ such that $V$ is
defined over $\overline{c}$, there is a $\overline{z}\in K^{n}$
such that $(\overline{z},E(\overline{z}))\in V$ and $\overline{z}$
is $\Q$-linearly independent from $\overline{c}$.
\end{enumerate}
This behaviour mimics the genericity assumption in (SEC), and it is
in fact deeply related to it; in fact, it is shown in \cite{Kirby2010a}
that (SP) and (FEC) taken together imply (SEC).

\section{Points with independent coordinates}

\label{sec:goodpoints}

In order to finish the proof, we need to verify that absolutely free
algebraic variety always contain the points needed for step (3).

It is known that if we take a variety $V$ and some functions on it
that are multiplicatively independent (the functions are allowed to
be constant), then for ``most'' points $P\in V(\oQ)$ the values
of the functions at $P$ are still multiplicatively independent \cite{Masser1989}.

Similarly, it is also not difficult to show that for ``most'' points
the specialisations of $\Q$-linearly independent functions are still
$\Q$-linearly independent (again, the functions are allowed to be
constant). In order to put together the two statements, we first intersect
our variety with hyperplanes, using Bertini's theorem, to reduce to
the case when $V$ is a curve, and then we prove the case of curves.
We first take care of the additive part.
\begin{prop}
\label{prop:linear-good-points}Let $\Cc$ be an absolutely irreducible
curve defined over a field $K$, and let $k=\oQ\cap K$. Let $z_{1},\dots,z_{n}$
be some $\Q$-linearly independent functions in $K(\Cc)$. Let $x\in K(\Cc)$
be a non constant function.

There is a number $d>0$, not dependent on $z$, such that for any
$\alpha\in\oQ$ with $[k(\alpha):k]>d$, the specialisations of $z_{1},\dots,z_{n}$
at any non-singular point $P\in x^{-1}(\alpha)$ are $\Q$-linearly
independent.\end{prop}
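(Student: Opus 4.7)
The plan is to exploit that every nonzero $\Q$-linear combination of $x_1,\dots,x_n$ has only a bounded number of zeros on $\Cc$, and then to turn such a zero into an upper bound on the residue field degree of the fiber point.

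I would first pass to a smooth projective model $\overline{\Cc}$ of $\Cc$ over $K$ and set $D := \sum_{i=1}^{n}(x_i)_\infty$, an effective divisor whose degree $d := \deg D$ depends only on $x_1,\dots,x_n$ and not on $z$. By construction $x_i \in L(D)$ for every $i$, hence so does every $\Q$-linear combination $f = \sum_i q_i x_i$. Since $x_1,\dots,x_n$ are $\Q$-linearly independent as functions, $f \neq 0$ whenever $(q_i) \neq 0$, and then $\deg(f)_0 = \deg(f)_\infty \leq \deg D = d$, so $f$ has at most $d$ zeros on $\overline{\Cc}$.

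Suppose, toward a contradiction, that for some $P \in z^{-1}(\alpha)$ the specialisations $x_1(P),\dots,x_n(P)$ satisfy a nontrivial $\Q$-linear relation $\sum q_i x_i(P) = 0$. Then the closed point $P_0$ underlying $P$ is a zero of the nonzero function $f = \sum q_i x_i$, which is defined over $K$ because the $q_i$ are rational. The contribution of $P_0$ to $(f)_0$ is at least its own degree $[\kappa(P_0):K]$, so $[\kappa(P_0):K] \leq d$. Since $\alpha = z(P) \in \kappa(P_0)$, this yields $[K(\alpha):K] \leq d$.

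The last step, which I expect to be the main subtlety, is to translate a degree bound over $K$ into one over $k$. The key observation is that $k = K \cap \oQ$ is algebraically closed in $K$: any $\beta \in K$ algebraic over $k$ is algebraic over $\Q$, hence in $\oQ \cap K = k$. In characteristic zero this makes $K/k$ a regular extension, so $K$ and $\oQ$ are linearly disjoint over $k$, and therefore $[K(\alpha):K] = [k(\alpha):k]$ for every $\alpha \in \oQ$. Combined with the bound $[K(\alpha):K] \leq d$, this gives $[k(\alpha):k] \leq d$, contradicting the hypothesis $[k(\alpha):k] > d$. If $d = 0$ one simply replaces it by $1$ to handle the degenerate case where all $x_i$ are constants.
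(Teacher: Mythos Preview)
Your proof is correct and follows essentially the same strategy as the paper's: both bound the number of zeros of a nontrivial $\Q$-linear combination of the $x_i$ on the curve, and then argue that a zero lying in the fibre $z^{-1}(\alpha)$ forces the degree of $\alpha$ over $k$ to be bounded. The only difference is in packaging: the paper bounds the degree of $f=\sum m_ix_i$ by $(n-k)\cdot\max_i[K(\Cc):K(x_i)]$ and then counts the Galois conjugates $\sigma(P)$ explicitly (using absolute irreducibility of $\Cc$ to extend $\Gal(L/K)$ to the function field and noting $z(\sigma P)=\sigma(\alpha)$), whereas you bound $\deg(f)_0$ via the pole divisor $D=\sum(x_i)_\infty$ and read off $[\kappa(P_0):K]\le d$ scheme-theoretically, then invoke the regularity of $K/k$ to obtain $[K(\alpha):K]=[k(\alpha):k]$; this linear-disjointness step is exactly what underlies the paper's claim that the conjugates $\sigma(\alpha)$ are distinct, so the two arguments are equivalent reformulations of one another.
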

\begin{proof}
Let $e$ be the maximum of $[K(\Cc):K(z_{i})]$ as $z_{i}$ ranges
among the non-constant functions.

Clearly, the equation 
\[
m_{1}x_{1}+\dots+m_{n}z_{n}=0,
\]
with the $m_{i}$'s not all zero, can be solved only in at most $ne$
points algebraic over $K$, since the function on the left is either
constant, hence non-zero by assumption, or it has degree at most $ne$.
We claim that that for any $\alpha\in\oQ$, if $[K(\alpha):K]=[k(\alpha):k]>ne$,
then any non-singular $P\in x^{-1}(\alpha)$ is such that $z_{1}(P),\dots,z_{n}(P)$
are $\Q$-linearly independent (note that $z_{1}$, $\dots$, $z_{n}$
have no zeroes or poles at $P$).

Indeed, let $\alpha$ and $P$ be given as above, and let $L$ be
a normal extension of $K$ that defines $P$. Clearly, $L\cap\oQ\supset k(\alpha)$
is a normal extension of $k$ by the assumption $k=K\cap\oQ$. Since
$\Cc$ is absolutely irreducible, we can extend the Galois action
of $\Gal(L/K)$ to $\Gal(L(\Cc)/K(\Cc))$. If there are $m_{1},\dots,m_{n}$
such that the above equation is satisfied, then by conjugation we
obtain several other $\sigma(P)$ satisfying the same equation. Since
$x(\sigma(P))=\sigma(\alpha)$, and $[k(\alpha):k]>ne$, we find more
than $ne$ distinct conjugates of $P$ all satisfying the above equation,
a contradiction.\end{proof}
\begin{cor}
\label{cor:linear-good-points}Let $\Cc$ be an absolutely irreducible
curve defined over $k$. Let $z_{1},\dots,z_{n}$ be some $\Q$-linearly
independent functions in $k(\Cc)$.

There is a number $d'>0$ such that for any $P\in\Cc(\overline{k})$
with $[k(P):k]>d'$, the specialisations of $z_{1},\dots,z_{n}$ at
$P$ are $\Q$-linearly independent.\end{cor}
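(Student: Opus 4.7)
The plan is to deduce the corollary directly from Proposition \ref{prop:linear-good-points} applied with $K=k$. Since $k\subset\oQ$, the hypothesis $k=\oQ\cap K$ is automatic, and the $x_{i}$'s are already in $k(\Cc)$, so all the data of the proposition is in place except for the choice of an auxiliary non-constant function $z$. Because $\Cc$ is a curve (hence not a single point), the function field $k(\Cc)$ contains non-constant elements, so I fix any such $z\in k(\Cc)$, and let $d$ be the corresponding constant from the proposition and $e:=[k(\Cc):k(z)]$.

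Given a closed point $P\in\Cc(\overline{k})$ at which $z$ is regular, set $\alpha:=z(P)\in\oQ$. The tower law gives
\[
[k(P):k]=[k(P):k(\alpha)]\cdot[k(\alpha):k],
\]
and since $P$ lies in the fibre $z^{-1}(\alpha)$, which consists of at most $[k(\Cc):k(z)]=e$ points, one has $[k(P):k(\alpha)]\leq e$. Consequently $[k(\alpha):k]\geq[k(P):k]/e$, so whenever $[k(P):k]>de$, the proposition applies with this $\alpha$ and yields $\Q$-linear independence of $x_{1}(P),\dots,x_{n}(P)$.

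It then suffices to set $d'$ to the maximum of $de$ and the (finitely many) degrees over $k$ of the poles of $z$ in $\Cc(\overline{k})$, since any $P$ with $[k(P):k]>d'$ automatically avoids the poles of $z$ and falls under the previous case. The only substantive points are the bound $[k(P):k(z(P))]\leq[k(\Cc):k(z)]$, which is a standard specialisation estimate on a curve, and the observation that the bad locus where $z$ is undefined contributes only finitely many points of bounded degree, so can be absorbed into $d'$. I do not expect any serious obstacle here: the corollary is really just the proposition with a non-canonical but irrelevant choice of $z$ inserted.
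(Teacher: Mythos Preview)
Your proposal is correct and matches the paper's own proof almost exactly: the paper also fixes a non-constant $z\in k(\Cc)$ of degree $e$, takes $d'\geq de$ (enlarged to avoid the finitely many points where $z$ is undefined), and concludes that $[k(P):k]>d'$ forces $[k(z(P)):k]>d$ so that Proposition~\ref{prop:linear-good-points} applies. Your justification of the bound $[k(P):k(z(P))]\leq e$ via the size of the fibre is slightly more explicit than the paper's, but the argument is the same.
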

\begin{proof}
Let us take a non-constant function $x\in k(\Cc)$ and let $e$ be
its degree.

Let $d$ be the number obtained by \prettyref{prop:linear-good-points}
applied to $x$ and $z_{1}$, $\dots$, $z_{n}$, and let $d'\geq d\cdot e$.
We take $d'$ large enough such that $P$ is non-singular and $x(P)$
is defined for each point with $[k(P):k]>d'$.

Now, if $P$ is a point such that $[k(P):k]>d':=d\cdot e$, then $x(P)$
is defined, finite and $[k(x(P)):k]>d$. By the previous proposition,
the specialisations of $z_{1},\dots,z_{n}$ at $P$ are $\Q$-linearly
independent.
\end{proof}
An analogous but different statement holds for the multiplicative
case for varieties of dimension greater than $1$.
\begin{prop}
\label{prop:mult-good-points}Let $V$ be an absolutely irreducible
variety defined over $k$ with $\dim(V)>1$. Let $w_{1},\dots,w_{n}$
be some functions in $k(V)$ that are multiplicatively independent
over $\overline{k}^{*}$.

There is a non-constant function $x\in k(V)$ such that the restrictions
of $w_{1},\dots,w_{n}$ at $V\cap x^{-1}(\alpha)$ are multiplicatively
independent over $\overline{k}^{*}$ for almost all $\alpha\in\overline{k}$.\end{prop}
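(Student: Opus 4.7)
The plan is a Bertini-type argument that splits into two cases depending on the dimension of the Zariski closure $W$ of the image of $V$ under $\phi=(w_1,\dots,w_n)\colon V\dashrightarrow\G_m^n$. First I would reformulate the desired property in purely function-field terms: if the generic fiber of $z$ is absolutely irreducible (which one can arrange using Bertini's irreducibility theorem, valid because $\dim V>1$), then for any non-trivial $m\in\Z^n$ the monomial $w^m:=w_1^{m_1}\cdots w_n^{m_n}$ is constant on a Zariski-dense family of fibers $V\cap\{z=\alpha\}$ if and only if $w^m$ lies in the relative algebraic closure $\overline{k(z)}\cap k(V)$. So it suffices to find a non-constant $z$ with absolutely irreducible generic fibers such that no non-trivial $w^m$ lies in $\overline{k(z)}$.

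If $\dim W<\dim V$, a transcendence argument settles the matter. Indeed, $\td_k(k(w_1,\dots,w_n))=\dim W<\dim V=\td_k(k(V))$, so there exists $z\in k(V)$ transcendental over $k(w_1,\dots,w_n)$; for such $z$, every non-trivial $w^m$ lies in $k(w_1,\dots,w_n)$ and is therefore transcendental over $k(z)$ (equivalently, $z$ is transcendental over $k(w^m)$), giving $w^m\notin\overline{k(z)}$. A slight perturbation within this transcendental family ensures that Bertini's theorem also applies to the chosen $z$.

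The interesting case is $\dim W=\dim V$, where $\phi$ is generically finite and one can reduce to the situation $V=W\subset\G_m^n$ with the $w_i$ being the coordinate functions. The condition on $z$ then becomes: for almost all $\alpha$, the hyperplane section $V\cap\{z=\alpha\}$ is absolutely irreducible and not contained in any proper subtorus coset of $\G_m^n$. Taking $z$ to be a generic linear form in the projective coordinates reduces the proposition to a Bertini-type statement for subvarieties of tori, asserting that a generic hyperplane section of an absolutely irreducible $V\subset\G_m^n$ not contained in any proper subtorus coset remains both absolutely irreducible and not contained in any such coset. This is the main obstacle: a priori, ``not contained in any subtorus coset'' is a countable conjunction of closed conditions (one per non-trivial character of $\G_m^n$), which over the countable base field $k\subset\oQ$ cannot be handled by a naive generic-point argument. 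The uniform monomial structure of the characters of $\G_m^n$, together with specialisation results in the spirit of those used elsewhere in the paper, is what makes it possible to control all the characters simultaneously.
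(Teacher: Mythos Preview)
Your function-field reformulation and the case split are reasonable first moves, and Case~1 can indeed be handled along the lines you indicate---though note that ``$w^m\notin\overline{k(z)}$ for every $m$'' only yields, for each fixed $m$, a cofinite set of good $\alpha$'s; to get one cofinite set that works for all $m$ simultaneously you should rather argue that $\phi\restriction_{V_\alpha}$ is dominant onto $W$ for almost all $\alpha$, which does follow from $z$ being transcendental over $k(W)$.

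The genuine gap is in Case~2, and you name it yourself: over a countable base field one cannot simply pick a hyperplane avoiding a countable union of bad loci, and your proposal does not actually supply the missing argument. The specialisation results of Masser invoked elsewhere in the paper concern specialisations of a fixed finitely generated subgroup at points of a given curve; they are not about choosing the slicing function $z$, so the appeal to them here is not substantiated.

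The paper sidesteps the countability issue by a different, entirely finitary, device. Multiplicative independence of $w_1,\dots,w_n$ modulo constants is equivalent to $\Q$-linear independence of the Weil divisors $\mathrm{div}(w_i)$ on a smooth projective model of $V$. After replacing the $w_i$ by suitable monomial combinations one finds \emph{finitely many} distinct prime divisors $W_1,\dots,W_n$ so that the order matrix $(\mathrm{ord}_{W_i}(w_j))_{i,j}$ is diagonal with non-zero diagonal. One then only needs a pencil $\{z=\alpha\}$ such that for almost all $\alpha$ the slices $V\cap\{z=\alpha\}$ and $W_i\cap\{z=\alpha\}$ are all absolutely irreducible and the latter remain pairwise distinct. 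These are finitely many Bertini-type open conditions on the linear system, hence simultaneously satisfiable over any infinite field. On such a slice the order matrix stays diagonal, so the restricted divisors are still $\Q$-linearly independent and the restricted $w_i$ remain multiplicatively independent modulo constants. No case split on $\dim W$ is needed, and the countable family of characters of $\G_m^n$ never enters the argument.
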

\begin{proof}
Up to birational equivalence, we may assume that $V$ is smooth and
projective.

Since $w_{1},\dots,w_{n}$ are multiplicatively independent modulo
constants, it means that the Weil divisors of $w_{1},\dots,w_{n}$
are $\Q$-linearly independent. Up to taking a multiplicative combination
of the $w_{i}$'s, we may assume that there are $W_{1},\dots,W_{n}$
distinct prime divisors such that $w_{i}$ has a pole in $W_{i}$,
but has no zeroes and poles among the remaining $W_{j}$'s; in other
words, the matrix $(o_{W_{i}}(w_{j}))_{i,j}$ is diagonal, where $o_{W_{i}}(w_{j})$
is the order of $w_{j}$ at $W_{i}$.

Up to enlarging $k$, we may assume that these prime divisors have
degree $1$ and are all defined over $k$. It is clear that among
all the hyperplanes $H$ that intersect $V$ properly, the ones such
that $H\cap W_{i}=H\cap W_{j}$, with $i\neq j$, form a proper Zariski-closed
subset. By Bertini's theorem, it is also true that the ones such that
$H\cap V$ is not absolutely irreducible, and similarly the ones such
that $H\cap W_{i}$ is not absolutely irreducible, form proper Zariski-closed
sets.

Therefore, we can find an hyperplane $H$ represented by the equation
$x=0$ such that $x^{-1}(\alpha)\cap W_{i}$ and $x^{-1}(\alpha)\cap V$
are all smooth and distinct absolutely irreducible varieties for almost
all $\alpha\in\overline{k}$. But then the restrictions of $w_{1},\dots,w_{n}$
to $x^{-1}(\alpha)\cap V$ are such that $(o_{H\cap W_{i}}(w_{j}))_{i,j}$
is still a diagonal matrix, which implies that their divisors are
still $\Q$-linearly independent, hence the restrictions are multiplicatively
independent over $\overline{k}^{*}$.
\end{proof}
We shall use the above statements to reduce to the case of curves.
For curves, we adopt a different strategy.
\begin{prop}
\label{prop:curves-good-points}Let $\Cc\subset\oQ^{n}\times(\oQ^{*})^{n}$
be an irreducible curve over $\oQ$ and $L<\oQ$, $M<\oQ^{*}$ be
two finite-rank subgroups. If $\Cc$ is absolutely multiplicatively
free, and additively free over $L$, then there is a point $(\alpha_{1},\dots,\alpha_{n},\beta_{1},\dots,\beta_{n})\in\Cc$
such that $\alpha_{1},\dots,\alpha_{n}$ are $\Q$-linearly independent
from $L$, and such that $\beta_{1},\dots,\beta_{n}$ are multiplicatively
independent from $M$.\end{prop}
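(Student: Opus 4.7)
The plan is to translate each freeness hypothesis into an independence statement about the coordinate functions on $\Cc$, and then apply \prettyref{cor:linear-good-points} to the additive side and the results of \cite{Masser1989} on specialisations of multiplicatively independent functions to the multiplicative side. To begin with, I would choose a number field $k\subseteq\oQ$ containing the field of definition of $\Cc$, a $\Q$-basis $\ell_{1},\dots,\ell_{m}$ of $\mathrm{span}_{\Q}(L)$, and elements $\mu_{1},\dots,\mu_{m'}\in M$ whose classes form a $\Z$-basis of $M$ modulo torsion; this is possible because $L$ and $M$ have finite rank. Let $x_{1},\dots,x_{n}$ and $w_{1},\dots,w_{n}$ be the coordinate functions in $k(\Cc)$ coming from the $\G_{a}^{n}$ and $\G_{m}^{n}$ factors.

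The additive freeness of $\Cc$ over $L$ forces $x_{1},\dots,x_{n},\ell_{1},\dots,\ell_{m}$ to be $\Q$-linearly independent in $k(\Cc)$: any nontrivial relation involving some $x_{i}$ would express a nonzero $\Q$-linear combination of the $x_{i}$'s as an element of $\mathrm{span}_{\Q}(L)$, contradicting freeness, while a relation among the $\ell_{j}$'s alone is excluded by their $\Q$-linear independence in $\oQ$. Applying \prettyref{cor:linear-good-points} to this tuple yields a constant $d_{1}$ such that for every $P\in\Cc(\overline{k})$ with $[k(P):k]>d_{1}$, the specialisations $\alpha_{i}:=x_{i}(P)$ together with $\ell_{1},\dots,\ell_{m}$ remain $\Q$-linearly independent, which is precisely the statement that $\alpha_{1},\dots,\alpha_{n}$ are $\Q$-linearly independent from $L$.

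On the multiplicative side, absolute multiplicative freeness of $\Cc$ combined with the multiplicative independence modulo torsion of the $\mu_{j}$'s gives that $w_{1},\dots,w_{n},\mu_{1},\dots,\mu_{m'}$ are multiplicatively independent modulo torsion in $k(\Cc)^{*}$: any relation would force a nontrivial product of the $w_{i}$'s to equal a constant, contradicting freeness. Invoking the results of \cite{Masser1989}, there is a constant $d_{2}$ such that for every $P\in\Cc(\overline{k})$ with $[k(P):k]>d_{2}$ the values $\beta_{i}:=w_{i}(P)$ together with $\mu_{1},\dots,\mu_{m'}$ remain multiplicatively independent modulo torsion; unpacking this, no nontrivial product of the $\beta_{i}$'s lies in the divisible hull of $M$, which is exactly the required independence from $M$.

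Setting $d=\max(d_{1},d_{2})$, the set of $P\in\Cc(\overline{k})$ with $[k(P):k]>d$ is infinite — such points are produced by fibring $\Cc$ over $\A^{1}$ via a non-constant function in $k(\Cc)$ and taking preimages of algebraic numbers of arbitrarily large degree in the base — and since $\Cc$ is an irreducible curve, any infinite subset of its points is Zariski dense in $\Cc$. The principal obstacle is the multiplicative step: \prettyref{cor:linear-good-points} handles the additive side by elementary means, whereas the control of multiplicative dependence relies on the non-trivial arithmetic content of \cite{Masser1989}, namely that, on a curve over a number field, multiplicatively dependent specialisations of independent functions can only occur at points of bounded degree.
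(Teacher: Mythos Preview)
Your additive argument via \prettyref{cor:linear-good-points} is fine and matches the paper, but the multiplicative step contains a genuine error: the claim that \cite{Masser1989} yields a degree bound $d_{2}$ beyond which multiplicative independence is preserved is false, and no such bound can exist. Take $n=1$, $\Cc=\{(x,w):w=x,\ x\neq 0\}\subset\G_{a}\times\G_{m}$, and $M=\{1\}$. This curve is absolutely multiplicatively free, yet the points $P_{N}=(\zeta_{N},\zeta_{N})$ with $\zeta_{N}$ a primitive $N$-th root of unity have $w(P_{N})$ torsion, hence multiplicatively dependent, while $[\Q(P_{N}):\Q]=\varphi(N)$ is unbounded. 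So large degree alone never excludes the bad set, and your final sentence misstates what \cite{Masser1989} proves.

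What \cite{Masser1989} actually gives is a bound on the complexity of the bad set of bounded degree \emph{and bounded height}: in the paper's notation, $\omega(\mathcal{E}(d,h))\leq c_{1}(d)h^{k}$, polynomial in $h$. The paper's proof pairs this with the exponential lower bound $\omega(\Cc(d,h))\geq\exp(c_{2}(d)h)$ for the full set of points of degree $\leq d$ and height $\leq h$. Fixing $d_{2}$ larger than the degree of $\Cc$, of a given hypersurface $H$, and of the additive bound $d_{1}$ from \prettyref{cor:linear-good-points}, one lets $h\to\infty$: the exponential eventually exceeds $\omega(\Cc(d_{1},h_{1}))+c_{1}(d_{2})h^{k}$, forcing the existence of a point outside $H$, of degree $>d_{1}$, and not in $\mathcal{E}(d_{2},h)$. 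The argument is a growth-rate comparison in the height parameter, not a degree cutoff; your proposal collapses precisely where this comparison is needed.
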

\begin{proof}
Without loss of generality, we may assume that $\Cc$ is absolutely
irreducible. Let $z_{1}$, $\dots$, $z_{n}$ and $w_{1}$, $\dots$,
$w_{n}$ be the coordinate functions of $\oQ^{n}\times(\oQ^{*})^{n}$
restricted to $\Cc$ and $a_{1},\dots,a_{m}$ be a finite set of divisible
generators of $M$. Let $k$ be a number field defining $\Cc$ and
containing $a_{1},\dots,a_{m}$.

Using the notation of \cite{Masser1989}, we define
\begin{itemize}
\item $\Cc(d,h)$ the set of all points of $\Cc$ of degree at most $d$
and height at most $h$;
\item $\mathcal{E}(d,h)$ the set of all points of $\Cc$ of degree at most
$d$ and height at most $h$ such that the specialisations of $w_{1},\dots,w_{n}$
are multiplicatively dependent on $M$;
\item $\omega(S)$, for a finite set $S$, the minimum degree of an hypersurface
containing all the points of $S$.
\end{itemize}
Applying the main result of \cite[\S 5]{Masser1989} to $\G_{m}(k(\Cc))$
and to the group generated by $w_{1},\dots,w_{n},a_{1},\dots,a_{n}$,
we find a function $c_{1}(d)$ and a number $k$ such that $\omega(\mathcal{E}(d,h))\leq c_{1}(d)h^{k}$,
while we also find a $c_{2}$ such that $\omega(\Cc(d,h))\geq\exp(c_{2}(d)h)$
when $d$ is at least the degree of $\Cc$. %
\footnote{The statement of \cite{Masser1989} is actually that $\omega(\Cc(d,h))\geq\exp(ch)$
when $d=\deg(\Cc)$. However, the proof only requires that there is
a dominant map $\pi:\Cc\to\P^{m}$ of degree $d$ with $m=\dim\Cc$.
Such maps exist for example for any multiple of $\deg(\Cc)$, as we
can compose $\pi$ with dominant self maps of $\P^{m}$ which exist
for any positive degree.%
}

Now using \prettyref{cor:linear-good-points} on $\Cc$ and $L$ we
obtain a number $d_{1}$ such that when $[k(P):k]>d_{1}$ the specialisations
of $z_{1}$, $\dots$, $z_{n}$ at $P$ are $\Q$-linearly independent
from $L$. We may choose $d_{1}$ larger than the degree of $\Cc$.
Now let $d_{2}$, $h_{1}$, $h_{2}$ be numbers such that

\begin{eqnarray*}
 & \omega(\Cc(d_{2},h_{2}))\geq\exp(c_{2}(d_{2})h_{2})>\omega(\Cc(d_{1},h_{1}))+c_{1}(d_{2})h_{2}^{k}\geq\\
 & \geq\omega(\Cc(d_{1},h_{1}))+\omega(\mathcal{E}(d_{2},h_{2}));
\end{eqnarray*}

Then there must be a point $P$ of degree strictly greater than $d_{1}$
such that the specialisations of $w_{1},\dots,w_{n}$ at $P$ are
multiplicatively independent from $a_{1},\dots,a_{n}$, hence from
$M$. Since its degree is greater than $d_{1}$, the specialisations
of $z_{1}$, $\dots$, $z_{n}$ are also $\Q$-linearly independent
from $L$, as desired.
\end{proof}
Putting the statements together, we can prove the general version
we need for step (3).
\begin{prop}
\label{prop:good-points}Let $V\subset\oQ^{n}\times(\oQ^{*})^{n}$
be an irreducible absolutely free variety over $\oQ$, and let $L<\oQ$,
$M<\oQ^{*}$ be two finite-rank subgroups. There is a point $(\alpha_{1},\dots,\alpha_{n},\beta_{1},\dots,\beta_{n})\in V$
such that $\alpha_{1},\dots,\alpha_{n}$ are $\Q$-linearly independent
from $L$ and $\beta_{1},\dots,\beta_{n}$ are multiplicatively independent
from $M$.\end{prop}
\begin{proof}
We prove the theorem by induction on $m=\dim(V)$. Our inductive hypothesis
is that if $V$ is absolutely irreducible, absolutely multiplicatively
free, and additively free over $L$, then it contains a point as in
the conclusion. The base case $m=1$ is covered by \prettyref{prop:curves-good-points}.
Let $k$ be a number field defining $V$.

Let us suppose that $m>1$, and that we have proven the theorem for
all the varieties of dimension $m-1$. Let $z_{1},\dots,z_{n}$ and
$w_{1}$, $\dots$, $w_{n}$ be the coordinate functions of $\oQ^{n}\times(\oQ^{*})^{n}$
restricted to $V$. Moreover, let $\{b_{1},\dots,b_{m}\}$ be a $\Q$-basis
of the vector space generated by $L$. By \prettyref{prop:mult-good-points},
there is a non-constant function $x$ such that for almost all $\alpha\in\overline{k}$
we have
\begin{enumerate}
\item $V_{\alpha}:=V\cap x^{-1}(\alpha)$ is absolutely irreducible;
\item $\dim(V_{\alpha})=m-1$;
\item the functions $\{w_{1},\dots,w_{n}\}$ restricted to $V_{\alpha}$
are multiplicatively independent over $\oQ^{*}$.
\end{enumerate}
Now take any transcendence base of $k(V)$ of the form $X\cup\{x\}$.
Then $V$ can be seen also as an absolutely irreducible curve over
$k(X)$, and $x$ is a non-constant function on it.

By applying \prettyref{prop:linear-good-points} to $V$ seen as a
curve over $K:=k(X)$, as soon as $[k(\alpha):k]$ is sufficiently
large, the functions $\{z_{1},\dots,z_{n},b_{1},\dots,b_{m}\}$ are
$\Q$-linearly independent when restricted to $V_{\alpha}$. Therefore
$V_{\alpha}$ satisfies the same properties of $V$, and by inductive
hypothesis, it contains a point $P$ such that the specialisations
of $z_{1}$, $\dots$, $z_{1}$ at $P$ are $\Q$-linearly independent
from $L$ and the specialisations of $w_{1}$, $\dots$, $w_{n}$
at $P$ are multiplicatively independent from $M$, as desired.\end{proof}
\begin{rem}
The above proof relies on the results exposed in \cite{Masser1989}.
These results depend on the Northcott Property of number fields. Using
other techniques of Diophantine geometry it is possible to obtain
a similar result for other finitely generated fields without the same
quantitative statements, but still strong enough to obtain again \prettyref{prop:curves-good-points}.
This implies that this construction works also on all algebraically
closed fields of characteristic $0$, and in particular of any fixed
transcendence degree.\end{rem}

\bibliographystyle{plainurl}
\bibliography{Bibliography}

\end{document}